\newtheorem{theorem}{Theorem}[section]
\newtheorem{proposition}[theorem]{Proposition}
\newtheorem{definition}[theorem]{Definition}
\begin{document}
\title{\bf Separation axioms of $\alpha^{m}$-open sets}
\date{}

\author{R. Parimelazhagan}

\address{Department of Mathematics, RVS Technical Campus, Coimbatore - 641 402, Tamilnadu, India. E-mail: pari$_{_{-}}$kce@yahoo.com.}

\author{Milby Mathew}

\address{Research Scholar, Department of Mathematics, Karpagam University, Coimbatore - 32, Tamilnadu, India. E-mail: milbymanish@yahoo.com}

\maketitle
\begin{abstract}
In this paper, we introduce $T_{\alpha^{m}}$-Spaces, $\alpha^{m}$-closed maps and $\alpha^{m}$-open maps and studied some of their properties.
\end{abstract}

{\noindent \bf Keywords :}  $T_{\alpha^{m}}$-Spaces, $\alpha^{m}$-closed maps and $\alpha^{m}$-open maps.\\

{\noindent \bf 2000 Mathematics Subject Classification : 57N40.}\\
\section{Introduction}

Separation axioms are one among the most common, important and interesting concepts in Topology. They can be used to define more restricted classes of topological spaces. The separation axioms of topological spaces are usually denoted with the letter $``T"$ after the German $``Trennung"$ which means separation. Most of the weak separation axioms are defined in terms of generalized closed sets and their definitions are deceptively simple. However, the structure and the properties of those spaces are not always that easy to comprehend. The separation axioms that were studied together in this way were the axioms for Hausdorff spaces, regular spaces and normal spaces. \\

Separation axioms and closed sets in topological spaces have been very useful in the study of certain objects in digital topology (\cite{khal1970}, \cite{kong1991}). Khalimsky, Kopperman and Meyer \cite{khal1990} proved that the digital line is a typical example of $T_\frac{1}{2}$-spaces. There were many definitions offered, some of which assumed to be separation axioms before the current general definition of a topological space. For example, the definition given by Felix Hausdorff in 1914 is equivalent to the modern definition plus the Hausdorff separation axiom. The first step of generalized closed sets was done by Levine in 1970 [5] and he initiated the notion of $T_\frac{1}{2}$-spaces in unital topology which is properly placed between $T_{0}$-space and $T_{1}$-spaces by defining $T_\frac{1}{2}$-space in which every generalized closed set is closed.  \\

The concept of generalized closed sets(briefly $g$-closed) in topological spaces was introduced by Levine \cite{le1970} and a class of topological spaces called $T_\frac{1}{2}$ spaces. Arya and Nour \cite{ar1990}, Bhattacharya and Lahiri \cite{bh1987}, Levine \cite{le1963}, Mashhour \cite{ma1982}, Njastad \cite{nj1965} and Andrijevic(\cite{an1986}, \cite{an1996}) introduced and investigated generalized semi-open sets, semi generalized open sets, generalized open sets, semi-open sets, pre-open sets and $\alpha$-open sets, semi pre-open sets and $b$-open sets which are some of the weak forms of open sets and the complements of these sets are called the same types of closed sets. 

 In 2014, \cite{milb2014} we introduced the concepts of $\alpha^{m}$-closed sets and $\alpha^{m}$-open set in topological spaces. Also we have introduced the concepts of $\alpha^{m}$-continuous functiopns and $\alpha^{m}$-irresolute functions. In this paper, based on the $\alpha^{m}$-open and $\alpha^{m}$-closed sets we introduce separation axioms of $\alpha^{m}$-open set called $T_{\alpha^{m}}$-space. Further various characterisation are studied. 
are introduced
\section{Preliminaries}
Throughout this paper $(X, \tau)$ and $(Y, \sigma)$ represent topological spaces on which no separation axioms are assumed unless otherwise mentioned. For a subset $A$ of a space $(X, \tau)$, $cl(A)$, $int(A)$ and $A^{c}$ denote the closure of $A$, the interior of $A$ and the complement of $A$ in $X$, respectively. \\

\begin{definition}
A subset $A$ of a topological space $(X, \tau)$ is called \\ \\
$(a)$ a preopen set \cite{ma1982} if $A\subseteq int(cl(A))$ and pre-closed set if $cl(int(A))\subseteq A$. \\ \\
$(b)$ a semiopen set \cite{le1963} if $A \subseteq cl(int(A))$ and semi closed set if $int(cl(A))\subseteq A$. \\ \\
$(c)$ an $\alpha$-open set \cite{nj1965} if $A \subseteq int(cl(int(A)))$ and an $\alpha$-closed set if $cl(int(cl(A)))\subseteq A$. \\ \\
$(d)$ a semi-preopen set \cite{an1986} ($\beta$-open set) if $A \subseteq cl(int(cl(A)))$ and semi-preclosed set if $int(cl(int(A))) \subseteq A$. \\ \\
$(e)$ an $\alpha^{m}$-closed set \cite{milb2014} if $int(cl(A))\subseteq U$, whenever $A\subseteq U$ and $U$ is $\alpha$-open. The complement of $\alpha^{m}$-closed set is called an $\alpha^{m}$-open set. \\
\end{definition}

\begin{definition}
A space $(X,\ \tau_{X})$ is called a $T_{\frac{1}{2}}$-space \cite{le1970} if every $g$-closed set is closed. \\
\end{definition}

\begin{definition} 
A function $f:(X,\ \tau)\longrightarrow(Y,\ \sigma)$ is called \\ \\
$(a)$ an $\alpha^{m}$-continuous \cite{milb} if $f^{-1}(V)$ is $\alpha^{m}$-closed in $(X,\ \tau)$ for every closed set $V$ of $(Y,\ \sigma)$. \\ \\
$(b)$ an $\alpha^{m}$-irresolute \cite{milb} if $f^{-1}(V)$ is $\alpha^{m}$-closed in $(X,\ \tau)$ for every $\alpha^{m}$-closed set $V$ of $(Y,\ \sigma)$. \\
\end{definition}

\section{Separation axioms of $\alpha^{m}$-open sets}
\begin{definition}
A space $(X, \tau)$ is called $T_{\alpha^{m}}$-space if every $\alpha^{m}$-closed set is closed. \\
\end{definition}

\begin{theorem}
For a topological space $(X, \tau)$ the following conditions are equivalent. \\ \\
$(a)$ $(X, \tau)$ is a $T_{\alpha^{m}}$-space. \\ \\
$(b)$ Every singleton $\left\{x\right\}$ is either $\alpha$-closed (or) clopen. \\
\end{theorem}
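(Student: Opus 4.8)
The plan is to establish the two implications $(a)\Rightarrow(b)$ and $(b)\Rightarrow(a)$ in turn, in each case reducing matters to how the operator $int(cl(\,\cdot\,))$ acts on singletons and on co-singletons, very much in the spirit of the classical fact that $(X,\tau)$ is $T_{\frac{1}{2}}$ exactly when every singleton is open or closed.

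For $(a)\Rightarrow(b)$ I would fix $x\in X$ and split into cases according to whether $\{x\}$ is $\alpha$-closed. If it is, there is nothing to do. If it is not, then $X\setminus\{x\}$ is not $\alpha$-open; since the only set properly containing $X\setminus\{x\}$ is $X$, the only $\alpha$-open set containing $X\setminus\{x\}$ is $X$ itself, so $X\setminus\{x\}$ fulfils the defining inclusion of an $\alpha^{m}$-closed set trivially (because $int(cl(X\setminus\{x\}))\subseteq X$). By $(a)$, $X\setminus\{x\}$ is then closed, i.e. $\{x\}$ is open. It remains to upgrade ``open'' to ``clopen'': an open singleton is $\alpha$-open, hence $\{x\}$ is an $\alpha$-open neighbourhood of itself, and feeding $\{x\}$ into the $\alpha^{m}$-closedness test shows $\{x\}$ is $\alpha^{m}$-closed provided $int(cl(\{x\}))\subseteq\{x\}$; granted this, $(a)$ makes $\{x\}$ closed and therefore clopen. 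Checking that last inclusion $int(cl(\{x\}))\subseteq\{x\}$ for an open singleton of a $T_{\alpha^{m}}$-space is the delicate point here, and I would look to deduce it from the elementary closure properties of $\alpha^{m}$-closed sets, or by one more application of $(a)$ to an auxiliary set built from $\{x\}$.

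For $(b)\Rightarrow(a)$, let $A$ be an $\alpha^{m}$-closed set; since $A\subseteq cl(A)$ always, it is enough to prove $cl(A)\subseteq A$, so suppose toward a contradiction that some $x\in cl(A)$ lies outside $A$. Hypothesis $(b)$ gives two cases. If $\{x\}$ is clopen, then $\{x\}$ is an open neighbourhood of $x$ and so must meet $A$, which is impossible since $x\notin A$. If $\{x\}$ is $\alpha$-closed, then $X\setminus\{x\}$ is an $\alpha$-open set containing $A$, and applying the definition of $\alpha^{m}$-closedness to it yields $int(cl(A))\subseteq X\setminus\{x\}$, i.e. $x\notin int(cl(A))$. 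This is where the main obstacle sits: I have $x\in cl(A)$ but only $x\notin int(cl(A))$, and I must rule that out. The cleanest remedy I can foresee is to upgrade the conclusion of $\alpha^{m}$-closedness, under hypothesis $(b)$, from ``$int(cl(A))\subseteq U$'' to ``$cl(A)\subseteq U$'' for every $\alpha$-open $U\supseteq A$, and then take $U=X\setminus\{x\}$ to finish; securing that strengthened inclusion — or locating it among the properties of $\alpha^{m}$-closed sets already available — is the step I expect to carry the real weight of the argument.
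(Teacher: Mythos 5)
Your two flagged gaps are the heart of the matter, and neither can be closed: the equivalence as stated fails in both directions, and the two unfillable steps you isolated are exactly the places where the paper's own proof goes wrong. For $(a)\Rightarrow(b)$, the part you completed (identical in substance to the paper's argument) only yields that a non-$\alpha$-closed singleton is \emph{open}; the upgrade to clopen is impossible. In the Sierpi\'nski space $X=\{a,b\}$ with $\tau=\{\emptyset,\{a\},X\}$ the $\alpha^{m}$-closed sets are $\emptyset$, $\{b\}$ and $X$, all closed, so $(a)$ holds; yet $\{a\}$ is open but neither closed nor $\alpha$-closed (its closure is $X$), so $(b)$ fails. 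In particular your hoped-for inclusion $int(cl(\{x\}))\subseteq\{x\}$ is false there, since $int(cl(\{a\}))=X$, and no auxiliary application of $(a)$ can rescue it. The paper hides this defect by concluding only that $\{x\}$ is ``$\alpha$-open'', which is not what $(b)$ asserts.

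For $(b)\Rightarrow(a)$, the strengthened inclusion you asked for ($cl(A)\subseteq U$ for every $\alpha$-open $U\supseteq A$) is likewise unavailable, and the implication itself is false: in $\mathbb{R}$ with the usual topology every singleton is closed, hence $\alpha$-closed, so $(b)$ holds; but $A=(0,1]$ satisfies $int(cl(A))=(0,1)\subseteq A\subseteq U$ for every superset $U$, so $A$ is $\alpha^{m}$-closed, yet $A$ is not closed. The definition of $\alpha^{m}$-closedness only ever controls $int(cl(A))$, so the case analysis (yours and the paper's alike) proves $int(cl(A))\subseteq A$, i.e.\ that $A$ is semi-closed, and nothing more. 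The paper bridges this gap with the assertion that ``trivially $A\subseteq int(cl(A))$'', which is false (any closed, hence $\alpha^{m}$-closed, set with empty interior violates it), and its further claim that $A=int(cl(A))$ means $A$ is clopen is also wrong (that equality defines regular open sets); so the paper's proof is not one you could or should reproduce. What survives of the theorem is precisely the two halves you actually proved: $(a)$ implies every singleton is $\alpha$-closed or open, and $(b)$ implies every $\alpha^{m}$-closed set is semi-closed.
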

\begin{proof}
$(a)\Rightarrow (b)$ Let $x\in X$. Suppose $\left\{x\right\}$ is not an $\alpha$-closed set of $(X, \tau)$. Then $X-\left\{x\right\}$ is not an $\alpha$-open set. Thus $X-\left\{x\right\}$ is an $\alpha$-closed set of $(X, \tau)$. Since $(X, \tau)$ is a $T_{\alpha^{m}}$-space, $X-\left\{x\right\}$ is a $\alpha$-closed set of $(X, \tau)$, i.e., $\left\{x\right\}$ is $\alpha$-open set of $(X, \tau)$. \\ \\
$(b)\Rightarrow (a)$ Let $A$ be an $\alpha^{m}$-closed set of $(X, \tau)$. Let $x\in int(cl(A))$ by $(b)$, $\left\{x\right\}$ is either $\alpha$-closed (or) clopen. \\ \\
$Case (i)$: Let $\left\{x\right\}$ be an $\alpha$-closed. If we assume that $x\notin A$, then we would have $x\in int(cl(A))-A$ which cannot happen. Hence $x\in A$.  \\ \\
$Case (ii)$: Let $\left\{x\right\}$ be a clopen. Since $x\in int(cl(A))$, then $\left\{x\right\}\bigcap A\neq\phi$. This shows that $x\in A$. So in both cases we have $int(cl(A))\subseteq A$. Trivially $A\subseteq int(cl(A))$. Therefore $A=int(cl(A))$ (or) equivalently $A$ is clopen. Hence $(X, \tau)$ is a $T_{\alpha^{m}}$-space. \\
\end{proof}

\begin{proposition}
If $f:(X, \tau)\longrightarrow(Y, \sigma)$ be an $\alpha^{m}$-continuous function and $(X, \tau)$ be a $T_{\alpha^{m}}$-space. Then $f$ is continuous.\\
\end{proposition}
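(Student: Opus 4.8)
The plan is to argue directly from the definitions, chaining together the $\alpha^{m}$-continuity of $f$ with the defining property of a $T_{\alpha^{m}}$-space. Recall that a function is continuous precisely when the preimage of every closed set of the codomain is closed in the domain, so it suffices to verify this preimage condition for $f$.

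First I would fix an arbitrary closed set $V$ of $(Y,\sigma)$. Since $f$ is $\alpha^{m}$-continuous, Definition $3.3(a)$ gives that $f^{-1}(V)$ is $\alpha^{m}$-closed in $(X,\tau)$. Now invoke the hypothesis that $(X,\tau)$ is a $T_{\alpha^{m}}$-space: by Definition $3.1$, every $\alpha^{m}$-closed subset of $X$ is closed. Applying this to $f^{-1}(V)$ yields that $f^{-1}(V)$ is closed in $(X,\tau)$.

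Since $V$ was an arbitrary closed set of $(Y,\sigma)$, we have shown that $f^{-1}$ sends closed sets to closed sets, which is exactly the statement that $f$ is continuous. I do not anticipate any real obstacle here; the proposition is an immediate consequence of composing the two hypotheses, and the only point requiring care is to invoke each definition in the correct direction, namely the preimage-of-closed-sets formulation of $\alpha^{m}$-continuity and the ``$\alpha^{m}$-closed $\Rightarrow$ closed'' property of a $T_{\alpha^{m}}$-space.
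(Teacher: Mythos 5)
Your proposal is correct and matches the paper's own proof essentially line for line: fix a closed set $V$ in $(Y,\sigma)$, use $\alpha^{m}$-continuity to get that $f^{-1}(V)$ is $\alpha^{m}$-closed in $(X,\tau)$, and then apply the $T_{\alpha^{m}}$-space hypothesis to conclude $f^{-1}(V)$ is closed. Nothing further is needed.
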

\begin{proof}
Let $V$ be closed set in $(Y, \sigma)$. Since $f$ is an $\alpha^{m}$-continuous function, $f^{-1}(V)$ is an $\alpha^{m}$-closed set in $(X, \tau)$. Since $(X, \tau)$ is a $T_{\alpha^{m}}$-space, $f^{-1}(V)$ is closed set in $(X, \tau)$. Hence $f$ is continuous. \\
\end{proof}

\begin{theorem}
Let $f:(X, \tau)\longrightarrow(Y, \sigma)$ be a mapping and $(X, \tau)$ be a $T_{\alpha^{m}}$-space, then $f$ is continuous if one of the following conditions is satisfied. \\ \\
$(a)$ $f$ is $\alpha^{m}$-continuous. \\ \\
$(b)$ $f$ is $\alpha^{m}$-irresolute. 
\end{theorem}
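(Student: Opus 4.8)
The plan is to dispatch the two cases separately, exploiting that case $(a)$ is nothing but the preceding proposition already established: an $\alpha^{m}$-continuous map out of a $T_{\alpha^{m}}$-space is continuous, so no new work is needed there. Thus the real content lies in case $(b)$, and the strategy is to reduce it to the very same mechanism --- pull back the set, apply the hypothesis on $f$, then invoke the $T_{\alpha^{m}}$ property --- once we know that ordinary closed sets of $(Y,\sigma)$ are legitimate inputs to the irresoluteness hypothesis.

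To that end, the key preliminary step is the elementary observation that every closed subset of a topological space is $\alpha^{m}$-closed. First I would take $V$ closed in $(Y,\sigma)$; then $cl(V)=V$, so $int(cl(V))=int(V)\subseteq V$. Hence, whenever $V\subseteq U$ with $U$ an $\alpha$-open set, we trivially obtain $int(cl(V))\subseteq V\subseteq U$, which is exactly the defining condition for $V$ to be $\alpha^{m}$-closed. This one-line containment chase is the only place any genuine computation enters the argument.

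With that in hand, case $(b)$ runs as follows. Let $V$ be closed in $(Y,\sigma)$. By the observation, $V$ is $\alpha^{m}$-closed in $(Y,\sigma)$. Since $f$ is $\alpha^{m}$-irresolute, $f^{-1}(V)$ is $\alpha^{m}$-closed in $(X,\tau)$. Since $(X,\tau)$ is a $T_{\alpha^{m}}$-space, $f^{-1}(V)$ is closed in $(X,\tau)$. As $V$ was an arbitrary closed set of $(Y,\sigma)$, $f$ is continuous, which proves $(b)$; together with $(a)$ this gives the theorem.

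I do not expect any real obstacle here: the proof is a short routing through the two hypotheses, and the only point requiring care is ensuring that the closed set $V$ is eligible to be fed into the irresoluteness condition --- which is precisely what the implication ``closed $\Rightarrow$ $\alpha^{m}$-closed'' guarantees. Alternatively, case $(b)$ could be obtained by first noting that this same implication, applied on the target, shows every $\alpha^{m}$-irresolute map is $\alpha^{m}$-continuous, and then quoting the preceding proposition verbatim; either route works and neither involves more than the single containment above.
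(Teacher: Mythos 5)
Your proposal is correct. The paper's own proof of this theorem is simply the word ``Obvious,'' so there is nothing written to diverge from; your argument is the natural one the authors evidently intend: part $(a)$ is exactly the preceding proposition on $\alpha^{m}$-continuous maps out of a $T_{\alpha^{m}}$-space, and part $(b)$ reduces to it via the observation that every closed set $V$ satisfies $int(cl(V))=int(V)\subseteq V\subseteq U$ for any $\alpha$-open $U\supseteq V$, hence is $\alpha^{m}$-closed, so that $\alpha^{m}$-irresoluteness applies to ordinary closed sets. The only added value of your write-up is that it makes explicit this ``closed $\Rightarrow$ $\alpha^{m}$-closed'' lemma, which the paper leaves implicit (it appears in the cited earlier work on $\alpha^{m}$-closed sets), and that containment chase is verified correctly.
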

\begin{proof} Obvious.
\end{proof}

\begin{theorem}
A map $f:(X, \tau)\longrightarrow(Y, \sigma)$ is an $\alpha^{m}$-continuous function if and only if the inverse image of every open set in $(Y, \sigma)$ is an $\alpha^{m}$-open set in $(X, \tau)$. \\
\end{theorem}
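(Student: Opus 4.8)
The plan is to prove this by the standard complementation argument, exploiting the fact that $\alpha^{m}$-continuity is defined via preimages of \emph{closed} sets while the statement to be proved concerns preimages of \emph{open} sets. The only structural fact I need beyond the definitions is that a subset of $(X,\tau)$ is $\alpha^{m}$-open precisely when its complement is $\alpha^{m}$-closed; this is immediate from Definition of $\alpha^{m}$-open (the complement of an $\alpha^{m}$-closed set), together with the involutive nature of complementation. I will also use the elementary set-theoretic identity $f^{-1}(Y\setminus B)=X\setminus f^{-1}(B)$ for any $B\subseteq Y$.

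For the forward direction, suppose $f$ is $\alpha^{m}$-continuous and let $U$ be an open set in $(Y,\sigma)$. Then $Y\setminus U$ is closed in $(Y,\sigma)$, so by $\alpha^{m}$-continuity $f^{-1}(Y\setminus U)$ is $\alpha^{m}$-closed in $(X,\tau)$. Using the identity above, $f^{-1}(Y\setminus U)=X\setminus f^{-1}(U)$, so $X\setminus f^{-1}(U)$ is $\alpha^{m}$-closed, whence $f^{-1}(U)$ is $\alpha^{m}$-open in $(X,\tau)$, as required.

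For the converse, assume the inverse image of every open set of $(Y,\sigma)$ is $\alpha^{m}$-open in $(X,\tau)$, and let $V$ be closed in $(Y,\sigma)$. Then $Y\setminus V$ is open, so $f^{-1}(Y\setminus V)=X\setminus f^{-1}(V)$ is $\alpha^{m}$-open in $(X,\tau)$; taking complements, $f^{-1}(V)$ is $\alpha^{m}$-closed, so $f$ is $\alpha^{m}$-continuous. I do not anticipate any genuine obstacle here: the argument is purely formal once the open/closed duality for $\alpha^{m}$-sets is invoked. The one point worth stating explicitly, so the reader is not left to check it, is the equivalence ``$A$ is $\alpha^{m}$-open $\iff$ $X\setminus A$ is $\alpha^{m}$-closed,'' which I will record at the start of the proof before running the two implications.
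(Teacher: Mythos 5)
Your proposal is correct and follows essentially the same route as the paper's proof: both directions are handled by the complementation argument using $f^{-1}(Y\setminus B)=X\setminus f^{-1}(B)$ and the duality between $\alpha^{m}$-open and $\alpha^{m}$-closed sets. Your explicit recording of the equivalence ``$A$ is $\alpha^{m}$-open $\iff$ $X\setminus A$ is $\alpha^{m}$-closed'' is a minor presentational addition; the substance matches the paper.
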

\begin{proof} $Necessity:$ Let $f:(X, \tau)\longrightarrow(Y, \sigma)$ be an $\alpha^{m}$-continuous function and $U$ be an open set in $(Y, \sigma)$. Then $Y-U$ is closed in $(Y, \sigma)$. Since $f$ is an $\alpha^{m}$-continuous function, $f^{-1}(Y-U)=X-f^{-1}(U)$ is an $\alpha^{m}$-closed in $(X, \tau)$ and hence $f^{-1}(U)$ is an $\alpha^{m}$-open in $(X, \tau)$. \\ \\
$Sufficiency:$ Assume that $f^{-1}(V)$ is an $\alpha^{m}$-open set in $(X, \tau)$ for each open set $V$ in $(Y, \sigma)$. Let $V$ be a closed set in $(Y, \sigma)$. Then $Y-V$ is an open set in $(Y, \sigma)$. By assumption, $f^{-1}(Y-V)=X-f^{-1}(V)$ is an $\alpha^{m}$-open in $(X, \tau)$, which implies that $f^{-1}(V)$ is an $\alpha^{m}$-closed set in $(X, \tau)$. Hence $f$ is an $\alpha^{m}$-continuous function. \\
\end{proof}

\begin{proposition}
Let $f:(X, \tau)\longrightarrow(Y, \sigma)$ be any topological space and $(Y, \sigma)$ be a $T_{\alpha^{m}}$-space. If $f:(X, \tau)\longrightarrow(Y, \sigma)$ and $g:(Y, \sigma)\longrightarrow(Z, \eta)$ are $\alpha^{m}$-continuous functions, then their composition $g\circ f:(X, \tau)\longrightarrow(Z, \eta)$ is an $\alpha^{m}$-continuous function.\\
\end{proposition}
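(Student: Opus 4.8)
The plan is to verify the definition of $\alpha^{m}$-continuity directly for the composition $g\circ f$: I must show that for every closed set $V$ in $(Z, \eta)$, the preimage $(g\circ f)^{-1}(V) = f^{-1}(g^{-1}(V))$ is an $\alpha^{m}$-closed set in $(X, \tau)$.

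First I would take an arbitrary closed set $V\subseteq Z$. Since $g$ is $\alpha^{m}$-continuous, $g^{-1}(V)$ is $\alpha^{m}$-closed in $(Y, \sigma)$. The crucial step is then to upgrade this to ordinary closedness: because $(Y, \sigma)$ is a $T_{\alpha^{m}}$-space, every $\alpha^{m}$-closed subset of $Y$ is closed, hence $g^{-1}(V)$ is in fact closed in $(Y, \sigma)$.

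Now that $g^{-1}(V)$ is a genuine closed set in $Y$, I can invoke the $\alpha^{m}$-continuity of $f$ once more: the preimage $f^{-1}(g^{-1}(V))$ is $\alpha^{m}$-closed in $(X, \tau)$. Since $f^{-1}(g^{-1}(V)) = (g\circ f)^{-1}(V)$, this shows that the preimage under $g\circ f$ of every closed set in $Z$ is $\alpha^{m}$-closed in $X$, which is precisely the assertion that $g\circ f$ is $\alpha^{m}$-continuous.

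The only subtlety worth flagging is that without the $T_{\alpha^{m}}$ hypothesis on $Y$ the argument would stall: $\alpha^{m}$-continuity of $f$ only produces $\alpha^{m}$-closed preimages of \emph{closed} sets, not of $\alpha^{m}$-closed sets (the latter stronger property being $\alpha^{m}$-irresoluteness), so the sole role of the separation axiom here is to convert the $\alpha^{m}$-closed set $g^{-1}(V)$ into a closed one so that $f$ can be applied. Beyond this point the proof is a short chase through the definitions, and I anticipate no genuine obstacle.
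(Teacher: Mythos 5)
Your proof is correct and follows exactly the same route as the paper's: pull back a closed set of $(Z,\eta)$ through $g$ to get an $\alpha^{m}$-closed set in $Y$, use the $T_{\alpha^{m}}$ hypothesis to upgrade it to a closed set, and then apply the $\alpha^{m}$-continuity of $f$. Your remark on why the separation axiom is indispensable is a nice addition, but the core argument coincides with the paper's proof.
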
 
\begin{proof}  Let $V$ be a closed set in $(Z, \eta)$. Since $g:(Y, \sigma)\longrightarrow(Z, \eta)$ is an $\alpha^{m}$-continuous function, $g^{-1}(V)$ is an $\alpha^{m}$-closed set in $(Y, \sigma)$. Since $(Y, \sigma)$ is a $T_{\alpha^{m}}$-space, $g^{-1}(V)$ is a closed set in $(Y, \sigma)$. Since $f:(X, \tau)\longrightarrow(Y, \sigma)$ is an $\alpha^{m}$-continuous function, $f^{-1}(g^{-1}(V))=(g\circ f)^{-1}(V)$ is an $\alpha^{m}$-closed set in $(X, \tau)$. Hence $g\circ f:(X, \tau)\longrightarrow(Z, \eta)$ is an $\alpha^{m}$-continuous function. \\
\end{proof} 

\begin{definition}
A map $f:(X,\ \tau)\longrightarrow(Y,\ \sigma)$ is said to be \\ \\
$(a)$ $\alpha^{m}$-closed map if $f(V)$ is $\alpha^{m}$-closed in $(Y,\ \sigma)$ for every closed set $V$ of $(X,\ \tau)$. \\ \\
$(b)$ $\alpha^{m}$-open map if $f(V)$ is $\alpha^{m}$-open in $(Y,\ \sigma)$ for every open set $V$ of $(X,\ \tau)$. \\ 
\end{definition}

\begin{theorem}
Let $f:(X, \tau)\longrightarrow(Y, \sigma)$ and $g:(Y, \sigma)\longrightarrow(Z, \eta)$ be two mappings and $(Y, \sigma)$ be a $T_{\alpha^{m}}$-space, then \\ \\
$(a)$ $g\circ f$ is $\alpha^{m}$-continuous, if $f$ and $g$ are $\alpha^{m}$-continuous. \\ \\
$(b)$ $g\circ f$ is $\alpha^{m}$-closed, if $f$ and $g$ are $\alpha^{m}$-closed. \\
\end{theorem}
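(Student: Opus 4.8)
The plan is to treat the two parts separately, in each case exploiting the $T_{\alpha^{m}}$ hypothesis on the middle space $(Y,\sigma)$ to convert an $\alpha^{m}$-closed subset of $Y$ into an honest closed subset, so that the two $\alpha^{m}$-type maps can be chained together.

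For part $(a)$ I would argue exactly as in Proposition 3.6. Start with an arbitrary closed set $V\subseteq(Z,\eta)$. Since $g$ is $\alpha^{m}$-continuous, $g^{-1}(V)$ is $\alpha^{m}$-closed in $(Y,\sigma)$; since $(Y,\sigma)$ is a $T_{\alpha^{m}}$-space, Definition 3.1 upgrades this to $g^{-1}(V)$ being closed in $(Y,\sigma)$; and since $f$ is $\alpha^{m}$-continuous, $f^{-1}\!\left(g^{-1}(V)\right)=(g\circ f)^{-1}(V)$ is $\alpha^{m}$-closed in $(X,\tau)$. As $V$ was an arbitrary closed set in $Z$, $g\circ f$ is $\alpha^{m}$-continuous.

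For part $(b)$ the argument is the image-side mirror of $(a)$. Take an arbitrary closed set $V\subseteq(X,\tau)$. Since $f$ is an $\alpha^{m}$-closed map, $f(V)$ is $\alpha^{m}$-closed in $(Y,\sigma)$; the $T_{\alpha^{m}}$ property of $(Y,\sigma)$ then makes $f(V)$ closed in $(Y,\sigma)$; and since $g$ is an $\alpha^{m}$-closed map, $g\!\left(f(V)\right)=(g\circ f)(V)$ is $\alpha^{m}$-closed in $(Z,\eta)$. Hence $g\circ f$ carries closed sets to $\alpha^{m}$-closed sets, i.e.\ it is an $\alpha^{m}$-closed map.

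Neither part presents a genuine obstacle; the only point that requires care is recognizing that the $T_{\alpha^{m}}$ hypothesis is needed precisely at the intermediate space $(Y,\sigma)$ --- it is what lets us pass from ``$\alpha^{m}$-closed in $Y$'' (the output of the first map) to ``closed in $Y$'' (the admissible input for the second map), since neither $\alpha^{m}$-continuity nor the $\alpha^{m}$-closed-map property says anything about $\alpha^{m}$-closed inputs. One may also note in passing that every closed set is $\alpha^{m}$-closed (immediate from Definition 2.1$(e)$, since $cl(A)=A$ forces $int(cl(A))=int(A)\subseteq A$), which is implicitly what makes the chaining in $(a)$ legitimate. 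I would write the whole argument in the same ``peel one layer, apply $T_{\alpha^{m}}$, peel the next layer'' style used throughout Section 3.
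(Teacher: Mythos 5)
Your proposal is correct and follows essentially the same route as the paper: part $(a)$ is word-for-word the paper's argument (closed $V$ in $Z$, pull back through $g$, upgrade via the $T_{\alpha^{m}}$ property of $(Y,\sigma)$, pull back through $f$), and your part $(b)$ is precisely the ``similar'' argument the paper omits, matching its Proposition on compositions of $\alpha^{m}$-closed maps. The only quibble is your closing aside: the fact that closed sets are $\alpha^{m}$-closed is not what legitimizes the chaining in $(a)$ --- the $T_{\alpha^{m}}$ step already hands $f$ an honest closed set, which is exactly what $\alpha^{m}$-continuity needs --- but this does not affect the correctness of the proof.
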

\begin{proof}
$(a)$ Let $V$ be a closed set of $(Z, \eta)$, then $g^{-1}(V)$ is $\alpha^{m}$-closed set in $(Y, \sigma)$. Since $(Y, \sigma)$ is a $T_{\alpha^{m}}$-space, then $g^{-1}(V)$ is a closed set in $(Y, \sigma)$. But $f$ is $\alpha^{m}$-continuous, then $(g\circ f)^{-1}(V)=f^{-1}(g^{-1}(V))$ is $\alpha^{m}$-closed in $(X, \tau)$ this implies that $g\circ f$ is $\alpha^{m}$-continuous mappings. \\ \\
$(b)$ The proof is similar. \\
\end{proof}

\begin{theorem}
Let $f:(X, \tau)\longrightarrow(Y, \sigma)$ be a mapping from a $T_{\alpha^{m}}$-space $(X, \tau)$ into a space $(Y, \sigma)$, then \\ \\
$(a)$ $f$ is continuous mapping if, $f$ is $\alpha^{m}$-continuous. \\ \\
$(b)$ $f$ is closed mapping if, $f$ is $\alpha^{m}$-closed. \\ 
\end{theorem}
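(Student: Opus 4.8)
The plan is to derive both parts directly from the defining property of a $T_{\alpha^{m}}$-space, namely that in such a space every $\alpha^{m}$-closed set is closed; in fact part (a) merely repeats the Proposition proved above (it is also the instance of the preceding continuity theorem corresponding to condition (a)), so there is nothing new to do there. For part (a): let $V$ be a closed set in $(Y,\sigma)$. Since $f$ is $\alpha^{m}$-continuous, $f^{-1}(V)$ is $\alpha^{m}$-closed in $(X,\tau)$; since $(X,\tau)$ is a $T_{\alpha^{m}}$-space, $f^{-1}(V)$ is therefore closed in $(X,\tau)$. As the inverse image of every closed set under $f$ is closed, $f$ is continuous.

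For part (b) the natural attempt is the mirror image: let $V$ be a closed set in $(X,\tau)$. Since $f$ is $\alpha^{m}$-closed, $f(V)$ is $\alpha^{m}$-closed in $(Y,\sigma)$, and one would then like to invoke the $T_{\alpha^{m}}$ axiom to upgrade ``$\alpha^{m}$-closed'' to ``closed'', concluding that $f(V)$ is closed in $(Y,\sigma)$ and hence that $f$ is a closed map.

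The step I expect to be the real obstacle is precisely this last one: the $T_{\alpha^{m}}$ hypothesis is placed on the domain $(X,\tau)$, whereas the set $f(V)$ whose closedness we need lives in the codomain $(Y,\sigma)$, and the $T_{\alpha^{m}}$ property of $X$ says nothing about subsets of $Y$. To push the argument through one genuinely needs $(Y,\sigma)$ to be a $T_{\alpha^{m}}$-space for part (b), and I would state (b) under that hypothesis on $Y$; with the $T_{\alpha^{m}}$ assumption matched to the side of the map on which the image sets live, each part collapses to a single line, and the only thing to watch is attaching the axiom to the correct space. Keeping the hypothesis only on $X$ would force additional conditions on $f$ beyond what the statement seems to intend, so the clean resolution is simply to put the $T_{\alpha^{m}}$ assumption on $X$ for part (a) and on $Y$ for part (b).
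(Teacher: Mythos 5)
Your part (a) is exactly the intended argument (the paper itself offers only ``Obvious''), and it is simply a restatement of the earlier Proposition that an $\alpha^{m}$-continuous map from a $T_{\alpha^{m}}$-space is continuous: pull back a closed set, use $\alpha^{m}$-continuity to get an $\alpha^{m}$-closed preimage, and use the $T_{\alpha^{m}}$ property of the domain to upgrade it to closed. Nothing to add there.

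Your diagnosis of part (b) is also correct, and it identifies a real defect in the statement rather than a gap in your own argument. Since an $\alpha^{m}$-closed map sends closed subsets of $X$ to $\alpha^{m}$-closed subsets of $Y$, the upgrade from ``$\alpha^{m}$-closed'' to ``closed'' must happen in $(Y,\sigma)$, so the $T_{\alpha^{m}}$ hypothesis is needed on the codomain, not the domain; the paper's placement of the hypothesis on $X$ (with proof ``Obvious'') does not support (b). Indeed (b) as stated is false: let $X=\{p\}$ be a one-point space (trivially $T_{\alpha^{m}}$) and let $Y=\{a,b\}$ carry the indiscrete topology. The only $\alpha$-open subsets of $Y$ are $\emptyset$ and $Y$, so $\{a\}$ is $\alpha^{m}$-closed in $Y$ (the only $\alpha$-open set containing it is $Y$ itself) but not closed. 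The map $f(p)=a$ is then $\alpha^{m}$-closed but not closed. With the hypothesis moved to $(Y,\sigma)$, your one-line argument for (b) goes through and matches the pattern of the paper's later Proposition on compositions of $\alpha^{m}$-closed maps, where the $T_{\alpha^{m}}$ assumption is correctly attached to the intermediate (image-side) space.
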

\begin{proof} Obvious. \\
\end{proof}

\begin{theorem}
Let $f:(X, \tau)\longrightarrow(Y, \sigma)$ is surjective closed and $\alpha^{m}$-irresolute, then $(Y, \sigma)$ $T_{\alpha^{m}}$-space if $(X, \tau)$ is also $T_{\alpha^{m}}$-space. \\
\end{theorem}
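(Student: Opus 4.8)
The plan is to take an arbitrary $\alpha^{m}$-closed subset $B$ of $(Y,\sigma)$ and show it is closed in $(Y,\sigma)$; by the definition of a $T_{\alpha^{m}}$-space, establishing this for every such $B$ is exactly what is required. The strategy is to pull $B$ back to $X$, apply the $T_{\alpha^{m}}$ hypothesis there, and then push the resulting closed set forward to $Y$ using that $f$ is closed and surjective.

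First I would use that $f$ is $\alpha^{m}$-irresolute: since $B$ is $\alpha^{m}$-closed in $(Y,\sigma)$, the preimage $f^{-1}(B)$ is $\alpha^{m}$-closed in $(X,\tau)$. Next, because $(X,\tau)$ is a $T_{\alpha^{m}}$-space, every $\alpha^{m}$-closed set is closed, so $f^{-1}(B)$ is closed in $(X,\tau)$.

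Then I would invoke the hypothesis that $f$ is a closed map: the image $f\!\left(f^{-1}(B)\right)$ of the closed set $f^{-1}(B)$ is closed in $(Y,\sigma)$. Finally, since $f$ is surjective, $f\!\left(f^{-1}(B)\right)=B$, and therefore $B$ is closed in $(Y,\sigma)$. As $B$ was an arbitrary $\alpha^{m}$-closed set of $(Y,\sigma)$, this shows $(Y,\sigma)$ is a $T_{\alpha^{m}}$-space.

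The proof is essentially a short diagram chase, so I do not expect a genuine obstacle. The one place that needs care is the use of surjectivity: it is precisely surjectivity that upgrades the general inclusion $f\!\left(f^{-1}(B)\right)\subseteq B$ to the equality $f\!\left(f^{-1}(B)\right)=B$ needed to conclude. It is also worth noting explicitly that "closed map" is being used in the sense of Definition (images of closed sets are closed) and that the $\alpha^{m}$-irresolute hypothesis — stated for $\alpha^{m}$-closed sets — is what makes the first step legitimate.
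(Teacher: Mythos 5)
Your proposal is correct and follows the same route as the paper: pull the $\alpha^{m}$-closed set back via irresoluteness, use the $T_{\alpha^{m}}$ hypothesis on $(X,\tau)$ to make the preimage closed, and push forward. In fact you make explicit the step the paper leaves implicit, namely that closedness of $f$ together with surjectivity gives $f\left(f^{-1}(B)\right)=B$ closed in $(Y,\sigma)$.
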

\begin{proof}
Let $V$ be an $\alpha^{m}$-closed subset of $(Y, \sigma)$. Then $f^{-1}(V)$ is $\alpha^{m}$-closed set in $(X, \tau)$. Since, $(X, \tau)$ is a $T_{\alpha^{m}}$-space, then $f^{-1}(V)$ is closed set in $(X, \tau)$. Hence, $V$ is closed set in $(Y, \sigma)$ and so, $(Y, \sigma)$ is $T_{\alpha^{m}}$-space. \\
\end{proof}

\begin{proposition}
If $f:(X,\ \tau)\longrightarrow(Y,\ \sigma)$ is $\alpha^{m}$-closed, $g:(Y, \sigma)\longrightarrow(Z, \eta)$ is an $\alpha^{m}$-closed, and $(Y,\ \sigma)$ is a $T_{\alpha^{m}}$-space, then their composition $g\circ f:(X, \tau)\longrightarrow(Z, \eta)$ is $\alpha^{m}$-closed. \\
\end{proposition}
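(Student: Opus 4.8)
The plan is to chase an arbitrary closed set through the two maps, using the $T_{\alpha^{m}}$-hypothesis on $(Y,\sigma)$ as the bridge between them. I would begin by fixing a closed set $V$ of $(X,\tau)$; the goal is to show that $(g\circ f)(V)$ is $\alpha^{m}$-closed in $(Z,\eta)$.

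First I would apply the assumption that $f$ is $\alpha^{m}$-closed: since $V$ is closed in $(X,\tau)$, the image $f(V)$ is $\alpha^{m}$-closed in $(Y,\sigma)$. Next I would invoke the hypothesis that $(Y,\sigma)$ is a $T_{\alpha^{m}}$-space, so that every $\alpha^{m}$-closed subset of $(Y,\sigma)$ is in fact closed; hence $f(V)$ is closed in $(Y,\sigma)$. Finally, since $g$ is $\alpha^{m}$-closed and $f(V)$ is now a genuine closed set of $(Y,\sigma)$, its image $g(f(V)) = (g\circ f)(V)$ is $\alpha^{m}$-closed in $(Z,\eta)$. As $V$ was an arbitrary closed set of $(X,\tau)$, this shows that $g\circ f$ is $\alpha^{m}$-closed.

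There is no real obstacle here: the argument is the exact analogue, for closed maps, of the composition result for $\alpha^{m}$-continuous functions proved earlier in this section, and the only point that matters is that the $T_{\alpha^{m}}$-property of the middle space $(Y,\sigma)$ is essential. Without it, the $\alpha^{m}$-closed set $f(V)$ need not be closed, and the $\alpha^{m}$-closedness of $g$ — which only promises to carry honest closed sets to $\alpha^{m}$-closed sets — could not be applied. I would also note in passing that the identity $g(f(V)) = (g\circ f)(V)$ used in the last step is elementary and requires no hypotheses on the spaces or the maps.
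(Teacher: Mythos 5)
Your proof is correct and follows essentially the same route as the paper's: take a closed set of $(X,\tau)$, push it through $f$ to get an $\alpha^{m}$-closed set in $(Y,\sigma)$, use the $T_{\alpha^{m}}$-property to upgrade it to a closed set, and then apply the $\alpha^{m}$-closedness of $g$. No differences worth noting.
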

\begin{proof}
Let $A$ be a closed set of $(X,\ \tau)$. Then by assumption $f(A)$ is $\alpha^{m}$-closed in $(Y,\ \sigma)$. Since $(Y,\ \sigma)$ is a $T_{\alpha^{m}}$-space, $f(A)$ is closed in $(Y,\ \sigma)$ and again by assumption $g(f(A))$ is $\alpha^{m}$-closed in $(Z, \eta)$. i.e., $(g\circ f)(A)$ is $\alpha^{m}$-closed in $(Z, \eta)$ and so $g\circ f$ is $\alpha^{m}$-closed. \\ 
\end{proof}

\begin{proposition}
For any bijection $f:(X, \tau)\longrightarrow(Y, \sigma)$ the following statements are equivalent: \\ \\
$(a)$ $f^{-1}:(X, \tau)\longrightarrow(Y, \sigma)$ is $\alpha^{m}$-continuous. \\ \\
$(b)$ $f$ is $\alpha^{m}$-open map. \\ \\
$(c)$ $f$ is $\alpha^{m}$-closed map. \\
\end{proposition}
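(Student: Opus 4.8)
The plan is to prove the three-way equivalence by the cycle $(a)\Rightarrow(b)\Rightarrow(c)\Rightarrow(a)$, using two completely elementary facts about a bijection $f$: first, for every $A\subseteq X$ we have $(f^{-1})^{-1}(A)=f(A)$, so ``inverse image under $f^{-1}$'' is the same thing as ``forward image under $f$''; and second, $f$ commutes with complementation, that is, $f(X-A)=Y-f(A)$ for every $A\subseteq X$. I would also note at the outset that in $(a)$ the map $f^{-1}$ is to be read as a function $(Y,\sigma)\longrightarrow(X,\tau)$, which is well defined precisely because $f$ is a bijection.

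For $(a)\Rightarrow(b)$, I would apply Theorem 3.4 to the map $f^{-1}:(Y,\sigma)\longrightarrow(X,\tau)$: since $f^{-1}$ is $\alpha^{m}$-continuous, the inverse image under $f^{-1}$ of every open set $U$ of $(X,\tau)$ is $\alpha^{m}$-open in $(Y,\sigma)$. But $(f^{-1})^{-1}(U)=f(U)$, so $f(U)$ is $\alpha^{m}$-open in $(Y,\sigma)$ for every open $U$ of $(X,\tau)$, which is exactly the assertion that $f$ is an $\alpha^{m}$-open map. For $(b)\Rightarrow(c)$, I would take an arbitrary closed set $C$ of $(X,\tau)$; then $X-C$ is open, so $f(X-C)$ is $\alpha^{m}$-open in $(Y,\sigma)$ by $(b)$, and since $f(X-C)=Y-f(C)$ and the complement of an $\alpha^{m}$-open set is $\alpha^{m}$-closed, $f(C)$ is $\alpha^{m}$-closed in $(Y,\sigma)$; hence $f$ is an $\alpha^{m}$-closed map. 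For $(c)\Rightarrow(a)$, I would take an arbitrary closed set $V$ of $(X,\tau)$; by $(c)$, $f(V)$ is $\alpha^{m}$-closed in $(Y,\sigma)$, and since $f(V)=(f^{-1})^{-1}(V)$, this says that the inverse image under $f^{-1}$ of every closed set of $(X,\tau)$ is $\alpha^{m}$-closed in $(Y,\sigma)$, i.e. $f^{-1}$ is $\alpha^{m}$-continuous by Definition 2.3.

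Honestly, I do not expect any real obstacle here: the whole argument is bookkeeping with the two identities above, and the only place where care is needed is the consistent use of bijectivity (both for $(f^{-1})^{-1}(A)=f(A)$ and for $f(X-A)=Y-f(A)$), since without injectivity the complement identity fails and without surjectivity $f(X-A)=Y-f(A)$ can also fail. If one preferred to avoid invoking Theorem 3.4, the step $(a)\Leftrightarrow(c)$ is in fact immediate straight from Definition 2.3 — $f^{-1}$ being $\alpha^{m}$-continuous means $(f^{-1})^{-1}(V)=f(V)$ is $\alpha^{m}$-closed for every closed $V$, which is verbatim the definition of $f$ being $\alpha^{m}$-closed — so one could equally well present the proof as the two short equivalences $(a)\Leftrightarrow(c)$ and $(b)\Leftrightarrow(c)$, the latter obtained by passing to complements as above.
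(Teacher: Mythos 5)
Your proof is correct and follows essentially the same route as the paper: the same cycle $(a)\Rightarrow(b)\Rightarrow(c)\Rightarrow(a)$ using the identities $(f^{-1})^{-1}(A)=f(A)$ and $f(X-A)=Y-f(A)$ for a bijection, with your explicit appeal to Theorem 3.4 in the step $(a)\Rightarrow(b)$ merely making precise what the paper leaves implicit. Your remark that $f^{-1}$ should be read as a map $(Y,\sigma)\longrightarrow(X,\tau)$ correctly flags a typo in the statement but does not change the argument.
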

\begin{proof}
$(a)\Longrightarrow (b)$ Let $U$ be an open set of $(X, \tau)$. By assumption, $(f^{-1})^{-1}(U)= f(U)$ is $\alpha^{m}$-open in $(Y, \sigma)$ and so $f$ is $\alpha^{m}$-open. \\ \\
$(b)\Longrightarrow (c)$ Let $F$ be a closed set of $(X, \tau)$. Then $F^{c}$ is open set in $(X, \tau)$. By assumption, $f(F^{c})$ is $\alpha^{m}$-open in $(Y, \sigma)$. That is $f(F^{c}) = (f(F))^{c}$ is $\alpha^{m}$-open in $(Y, \sigma)$ and therefore $f(F)$ is $\alpha^{m}$-closed in $(Y, \sigma)$. Hence $f$ is $\alpha^{m}$-closed. \\ \\
$(c)\Longrightarrow (a)$ Let $F$ be a closed set of $(X, \tau)$. By assumption, $f(F)$ is $\alpha^{m}$-closed in $(Y, \sigma)$. But $f(F)=(f^{-1})^{-1}(F)$ and therefore $f^{-1}$ is $\alpha^{m}$-continuous. \\
\end{proof}



\begin{thebibliography}{15}\label{bibliography}

\bibitem{an1986} Andrijevic. D, Semi-preopen sets, Mat. Vesink, 38 (1986), 24 - 32. \\

\bibitem{an1996} Andrijevic. D, On $b$-open sets,Mat. Vesink, 48 (1996), 59 - 64.  \\

\bibitem{ar1990} S. P. Arya and T. Nour, Characterizations of $s$-normal spaces, Indian J. Pure Appl. Math., 21 (1990), 717-719.  \\

\bibitem{bh1987} Bhattacharyya. P and Lahiri. B. K, Semi-generalizsed closed sets in topology, Indian J. Math., 29 (1987), 375 - 382.  \\

\bibitem{khal1970} E.D. Khalimsky, Applications of connected ordered topological spaces in topology, Conference of Math. Departments of Povolsia (1970).  \\

\bibitem{khal1990} E. Khalimsky, R. Kopperman, P.R. Meyer, Computer graphics and connected topologies on finite ordered sets , Topology Appl., 36, No. 1, (1990), 1-17.  \\

\bibitem{kong1991} T.Y. Kong, R. Kopperman, P.R. Meyer, A topological approach to digital topology, Amer. Math. Monthly, 98 (1991), 901-917.  \\

\bibitem{le1963} Levine. N, Semi-open sets and semi-continuity in topological spaces, Amer. Math. Monthly, 70 (1963), 36 - 41.  \\

\bibitem{le1970} Levine. N, Generalized closed sets in topology, Rend. Circ. Math. Palermo, 19 (1970), 89 - 96.  \\

\bibitem{ma1982}Mashhour. A. S, Abd EI-Monsef. M. E. and EI-Deeb. S. N., On Precontinuous and weak pre-continuous mapping, Proc. Math., Phys. Soc. Egypt, 53 (1982), 47 - 53.  \\

\bibitem{milb2014} Milby Mathew and R. Parimelazhagan, $\alpha^{m}$-Closed Sets in Topological Spaces, International Journal of Mathematical Analysis
Vol. 8, 2014, no. 47, 2325 - 2329. \\

\bibitem{milb} Milby Mathew and R. Parimelazhagan, $\alpha^{m}$-Continuous Maps in Topological Spaces. (submitted)  \\

\bibitem{nj1965} Njastad. O, On some classes of nearly open sets, Pacific J. Math., , 15 (1965), 961-970.  \\


\end{thebibliography}
\end{document}